\newtheorem{lem}{Lemma}
\newtheorem{lemma}[lem]{Lemma}
\newtheorem{thm}{Theorem}
\newtheorem{theorem}[thm]{Theorem}
\def\\{\cr}
\def\({\left(}
\def\){\right)}
\def\[{\left[}
\def\]{\right]}
\def\<{\langle}
\def\>{\rangle}
\begin{document}

\title{On members of Lucas sequences which are products of factorials}

\author{
{\sc Shanta~Laishram}\\
{Stat-Math Unit, Indian Statistical Institute}\\
{7, S. J. S. Sansanwal Marg, New Delhi, 110016, India}\\
{shanta@isid.ac.in}
\and
{\sc Florian~Luca}\\
{School of Mathematics, University of the Witwatersrand}\\
{Private Bag 3, Wits 2050, South Africa}\\
{Department of Mathematics, University of Ostrava}\\
{30 Dubna 22, 701 03}\\
{Ostrava 1, Czech Republic}\\
{florian.luca@wits.ac.za}
\and
{\sc Mark~Sias}\\
{Department of Pure and Applied Mathematics}\\
{University of Johannesburg}\\
{PO Box 524, Auckland Park 2006, South Africa.}\\
{msias@uj.ac.za}}

\date{\today}
\pagenumbering{arabic}

\maketitle

\begin{abstract}
Here, we show that if $\{U_n\}_{n\ge 0}$ is a Lucas sequence, then the largest $n$ such that $|U_n|=m_1!m_2!\cdots m_k!$ with $1<m_1\le m_2\le \cdots\le m_k$ satisfies $n<3\times 10^5$. We also give better bounds in case the roots of the Lucas sequence are real. 
\end{abstract}

\section{Introduction}

Let $r,~s$ be coprime nonzero integers with $r^2+4s\ne 0$. Let $\alpha,~\beta$ be the roots of the quadratic equation $x^2-rx-s=0$. We assume further that $\alpha/\beta$ is not a root of $1$.
The Lucas sequences $\{U_n\}_{n\ge 0}$ and $\{V_n\}_{n\ge 0}$ of parameters $(r,s)$ are given by
$$
U_n=\frac{\alpha^n-\beta^n}{\alpha-\beta}\qquad {\text{\rm and}}\qquad V_n=\alpha^n+\beta^n\qquad {\text{\rm for~all}}\qquad n\ge 0.
$$
Alternatively, they can be defined recursively as $U_0=0,~U_1=1,~V_0=2,~V_1=r$ and both recurrences
$$
U_{n+2}=rU_{n+1}+sU_n\quad {\text{\rm and}}\quad V_{n+2}=rV_{n+1}+sV_n\qquad  {\text{\rm hold for all}}\quad n\ge 0.
$$ 
Let 
$$
{\mathcal PF}:=\{\pm \prod_{j=1}^k m_j!: 1<m_1\le m_2\le\cdots \le m_k~{\text{\rm and}}~k\ge 1\}
$$
be the set of integers which are product of factorials $>1$ (an empty product is interpreted as $1$). 
In \cite{Lu}, it was shown that if $t\ge 1$ is any fixed integer, then the Diophantine equation
\begin{equation}
\label{eq:1}
\prod_{i=1}^t U_{n_i}\in {\mathcal PF}
\end{equation}
has only finitely many positive integer solutions $n_1\le n_2\le \cdots\le n_t$ and they are all effectively computable. When $(r,s)=(1,1)$ then $U_n=F_n$ is the $n$th Fibonacci number. For this particular case, it was shown in \cite{LuSt} that the largest solution of equation \eqref{eq:1} with the additional restriction that $1\le n_1<n_2<\cdots<n_t$ is 
$$
F_1F_2F_3F_4F_5F_6F_8F_{10}F_{12}=11!
$$
Similar results can be proved when in \eqref{eq:1} all $U_{n_i}$'s are replaced by $V_{n_i}$'s although we have not seen this being explicitly done in the literature. 
Here, we prove the following theorem.

\begin{theorem}\label{theorem1}
The equation  \eqref{eq:1} with $t=1$ implies $n_1\le 3\times 10^5$. When $\alpha, \beta $ are real, then $n_1\le 210$. Further, if $s=\pm 1$, then $n_1\leq 150$. The same results hold if in \eqref{eq:1} with $t=1$ we replace $U_{n_1}$ by $V_{n_1}$.   
\end{theorem}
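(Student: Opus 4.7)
The plan is to combine the Bilu--Hanrot--Voutier primitive divisor theorem with a $p$-adic valuation argument at small primes, supplemented by a linear-forms-in-logarithms estimate of Baker type to treat the complex-roots case. The companion-sequence statement for $V_n$ will follow by the same arguments, invoking the analogous primitive-divisor theorem for $V_n$ in place of the one for $U_n$.

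First, by Bilu--Hanrot--Voutier, for every $n>30$ the term $U_n$ admits a primitive prime divisor $P$ satisfying $P\equiv\pm1\pmod n$; in particular $P\ge n-1$. Since $P$ divides $|U_n|=\prod_j m_j!$, every prime factor of $U_n$ is at most $m_k$, which forces $m_k\ge n-1$. Combining the trivial bound $m_k!\le |U_n|\le |\alpha|^n/|\alpha-\beta|$ with Stirling's formula then yields a lower bound of roughly $|\alpha|\gtrsim (n-1)/e$, so the height of the sequence is pinned from below by an essentially linear function of $n$.

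Next I would extract an upper bound on $n$ from a $p$-adic analysis at small primes. Note that the rank of apparition $\ell=\ell(2)$ must exist, for otherwise $U_n$ is always odd and no solution with $k\ge1$ is possible. The lifting-the-exponent-style formula for Lucas sequences gives $\nu_2(U_n)\le\nu_2(U_\ell)+\log_2 n$, while Legendre's formula yields $\nu_2(\prod_j m_j!)\ge\nu_2(m_k!)\ge n-1-\log_2 n$. Using $\nu_2(U_\ell)\le\ell\log_2|\alpha|\le 6\log_2|\alpha|$, one obtains the clean inequality $n-1\lesssim 6\log_2|\alpha|+2\log_2 n$. In the real case, the additional relations $|\alpha\beta|=|s|$ and $\alpha+\beta=r$ tightly control $|\alpha|$, and refining with further $p$-adic estimates at $p=3,5,\dots$ together with an explicit numerical case analysis yields the tight bounds $n\le 150$ when $s=\pm1$ and $n\le 210$ for general real roots.

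The main obstacle is the complex-roots case. Here $|U_n|=\rho^{n-1}|\sin n\theta|/|\sin\theta|$ with $\alpha=\rho e^{i\theta}$, so $|U_n|$ can dip well below $\rho^n$ for particular $n$ and the Stirling-based lower bound on $|\alpha|$ weakens; moreover $|\alpha/\beta|=1$, so the usual small-error term in the Binet-type formula is absent. I would invoke Matveev's version of Baker's theorem on linear forms in logarithms, applied to a nonzero form such as $\Lambda=(n-1)\log\rho-\log|\sin\theta|+\log|\sin n\theta|-\sum_j\log(m_j!)$, with $\log|\sin n\theta|$ controlled by a separate Baker estimate for $n\theta\bmod\pi$. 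The transcendence-theoretic lower bound on $|\Lambda|$ combined with the constraints already derived yields an effective upper bound on $n$, which, after careful explicit numerical analysis (and possibly an LLL-type reduction over the basis of logarithms of primes appearing in the factorials), comes down to $n<3\times10^5$.
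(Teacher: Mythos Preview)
Your proposal has a fundamental gap: the $2$-adic argument cannot yield a bound on $n$ that is \emph{uniform} over all Lucas sequences, which is what the theorem asserts. Your chain of inequalities produces $n-1\lesssim 6\log_2|\alpha|+2\log_2 n$, but nothing in your argument bounds $|\alpha|$ from above; the Stirling step only gives a \emph{lower} bound $|\alpha|\gtrsim n/e$. Since $r,s$ range over all admissible pairs, $|\alpha|$ can be arbitrarily large (take $r=2^N$, $s=1$), and your inequality then only says $n\lesssim 6N$, which is useless. The sentence ``the additional relations $|\alpha\beta|=|s|$ and $\alpha+\beta=r$ tightly control $|\alpha|$'' is simply false in this context: $r$ and $s$ are not fixed. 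Likewise, the proposed Baker/Matveev step is not a well-posed linear form in logarithms: your $\Lambda$ vanishes identically on a solution, the quantities $\log(m_j!)$ have heights growing with the unknown $m_j$, and $\log|\sin n\theta|$ is not the logarithm of a fixed algebraic number; no LLL reduction over ``logarithms of primes in the factorials'' makes sense when the set of such primes is itself unbounded.

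The paper's proof avoids this by working with a quantity in which $\log|\alpha|$ \emph{cancels}. One isolates the ``primitive part'' $M_n=\prod_{p\equiv\pm1\ (n)}p^{\alpha_p}$ of $U_n$. A lower bound $\log M_n\ge(\varphi(n)-1)\log|\alpha|-O\bigl(2^{\omega(n)}(\log n)^2\log|\alpha|\bigr)$ comes from $|\Phi_n(\alpha,\beta)|$, using Voutier's explicit Baker-type estimates for $|\alpha^m-\beta^m|$ in the complex case. An upper bound comes from Legendre's formula together with a Brun--Titchmarsh estimate for $\sum_{p\le m_i,\ p\equiv\pm1\,(n)}\frac{\log p}{p-1}$, which yields $\log M_n\le \frac{4(1+\log\log n)}{\varphi(n)}\cdot n\log|\alpha|$ after invoking $n\log|\alpha|\ge\sum_{m_i\ge n-1}m_i(\log m_i-1)$. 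Comparing the two and dividing through by $\log|\alpha|$ (using only $\log|\alpha|\ge\frac34\log n$ to kill lower-order terms) gives an inequality purely in $n$, roughly $\varphi(n)^2\lesssim n\cdot 2^{\omega(n)}(\log n)^2$, from which the numerical bounds follow. This homogeneity in $\log|\alpha|$ is the crucial mechanism you are missing.
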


We leave it as a challenge to the reader to prove (and find a value of) that there exists $n_0$ which is absolute such that the largest solution of \eqref{eq:1} with $1\le n_1<n_2<\cdots<n_t$ (where $t$ is also a variable) satisfies $n_t<n_0$. Throughout the proof, we use $\omega(n), P(n), \mu(n), \varphi(n)$ with the regular meaning as being the number of distinct prime factors of $n$, the largest prime factor of $n$, 
the M\"obius function of $n$ and the Euler function of $n$, respectively. 

\section{Proof of Theorem \label{th1}}

We first treat the case of the sequence $\{U_n\}_{n\ge 0}$. At the end we indicate the slight change needed to cover the case of the sequence $\{V_n\}_{n\ge 0}$. We assume without loss of generality that $|\alpha|\ge |\beta|$. We may also assume that $n\ge 150$ is such that 
$U_n=\pm m_1! m_2!\cdots m_k!$ where $k\ge 1$ and $1<m_1\leq \cdots\leq m_k$. Since $n\ge 150$, 
$U_n$ has a primitive prime factor (see \cite{BHV}), which is a prime congruent to $\pm 1\pmod n$. This prime must divide $m_k!$, so $m_k\ge rn-1$ with $r=1$ if $n$ is even and $r=2$ if $n$ is odd. Thus, using $m!\ge {\sqrt{2\pi}} (m/e)^m>2(m/e)^m$, we have
$$
2|\alpha|^{n}\ge \left|\frac{\alpha^n-\beta^n}{\alpha-\beta}\right|=|U_n|\ge m_k!\ge (n-1)!\ge 2\left(\frac{rn-1}{e}\right)^{rn-1},
$$
so that 
\begin{align*}
\log |\alpha| \geq &\left(r-\frac{1}{n}\right)(\log (rn-1)-1)\\
=&\left(r-\frac{1}{4}\right)\log n+\left(\frac{1}{4}-\frac{1}{n}\right)\log n
+\left(r-\frac{1}{n}\right)\log\left(r-\frac{1}{n}\right)-\left(r-\frac{1}{n}\right)\\
\geq &\left(r-\frac{1}{4}\right)\log n+\left(\frac{1}{4}-\frac{1}{100}\right)\log 100
+\left(r-\frac{1}{100}\right)\log\left(r-\frac{1}{100}\right)\\
& -\left(r-\frac{1}{100}\right)
\end{align*}
since $n\geq 150>100$.  Taking $r\in \{1, 2\}$, we see that 
\begin{equation}
\label{eq:n0}
\log |\alpha|>\begin{cases}
\frac{3}{4}\log n \ & {\rm if} \ n \ {\rm is \ even};\\
\frac{7}{4}\log n \ & {\rm if} \ n \ {\rm is \ odd}.
\end{cases}
\end{equation}
In particular, $\log |\alpha|>\frac{3}{4}\log n$ for all $n$.  We now look at the Primitive Part of $U_n$. This is the part of $U_n$ built up only with primitive prime divisors $p$ which are those primes that do not divide $U_m$ for any $1\le m\le n-1$, and also do not divide $\Delta=r^2+4s$. Since $n>30$, these primes exist and they are all congruent to $\pm 1\pmod n$. Further, it is well-known (see, for example, Theorem 2.4 in \cite{BHV}), that 
$$
\prod_{\substack{p^{\alpha_p}\| u_n\\ p~{\text{\rm primitive}}}} p^{\alpha_p}=\frac{\Phi_n(\alpha,\beta)}{\delta},
$$
where 
$$
\Phi_n(\alpha,\beta)=\prod_{\substack{1\le k\le n\\ (k,n)=1}} (\alpha-e^{2\pi i k/n} \beta),
$$
is the specialisation of the homogenization $\Phi_n(X,Y)$ of the $n$th cyclotomic polynomial $\Phi_n(X)$ in the pair $(\alpha,\beta)$, while $\delta\in \{2,3,P(n)\}$. Here, $P(n)$ is the largest prime factor of $n$ as stated before.   Thus, in particular, 
\begin{equation}
\label{eq:2}
M_n:=\prod_{\substack{p^{\alpha_p}\| u_n\\ p\equiv \pm 1\pmod n}} p^{\alpha_p}\ge \prod_{\substack{p^{\alpha_p}\| u_n\\ p~{\text{\rm primitive}}}} p^{\alpha_p}\ge \frac{|\Phi_n(\alpha,\beta)|}{n}.
\end{equation}
It is well-known that
\begin{eqnarray}
\label{eq:3}
\Phi_n(\alpha,\beta) & = & \prod_{m|n} (\alpha^{n/m}-\beta^{n/m})^{\mu(m)};\\
|\alpha^m-\beta^m| & \le 2|\alpha|^m.\nonumber
\end{eqnarray}
If in addition $\alpha$ and $\beta$ are real, then the inequality 
\begin{equation}
\label{eq:4}
|\alpha^m-\beta^m|\ge |\alpha|^{m-1}
\end{equation}
holds. In this case, it is well-known and it follows easily from  \eqref{eq:3} and \eqref{eq:4} and $\sum_{m|n}\mu(m)n/m=\varphi(n)$ 
that
\begin{equation}
\label{eq:5}
\log |\Phi_n(\alpha,\beta)|\ge \varphi(n)\log |\alpha|-2^{\omega(n)-1}\left(\log 2+\log|\alpha|\right).
\end{equation}
When $s=\pm 1$, we can do much better. Namely in this case $\beta=\pm \alpha^{-1}$ and $|\alpha|\ge (1+{\sqrt{5}})/2$. Hence, from \eqref{eq:3}, one gets easily that
$$
\Phi_n(\alpha,\beta)\ge |\alpha|^{\phi(n)} \prod_{d\ge 1} \left(1-\frac{1}{\alpha^{2d}}\right)\left(1+\frac{1}{\alpha^{2d}}\right)^{-1}>|\alpha|^{\phi(n)}\times 0.278293,
$$
so
\begin{equation}
\label{eq:55}
\log |\Phi_n(\alpha,\beta)|\ge \varphi(n)\log |\alpha|-1.28.
\end{equation}
When $\alpha$ and $\beta$ are complex conjugates, a lower-bound on the left--hand side of \eqref{eq:4} can be obtained using a linear 
form in two complex logarithms \'a la Baker. This was worked out in \cite{Vou} 
(see Lemma $5 (ii)$ and Theorem $2 (ii)$ in \cite{Vou}) and given for $m\geq 3$ by both 
\begin{align}
\label{a-b1}
\log |\alpha^m-\beta^m|\ge m\log |\alpha|-\left( \frac{m}{\gcd(m, 2)}+\frac{\log 2}{4}+0.02\right)\log |\alpha| 
\end{align}
\begin{align}\label{a-b2}
{\rm and} \ &\log |\alpha^m-\beta^m|\ge m\log |\alpha|-73\log |\alpha| \left(\log \frac{m}{\gcd(m, 2)}\right)^2.
\end{align}
For $m\in \{1, 2\}$, we have $|\alpha^m-\beta^m|\ge (m-1)\log |\alpha|$. 
As also remarked in \cite{Vou}, the inequality \eqref{a-b1}  is better when $m\leq 5358$. Using \eqref{a-b2}, we obtain 
(as in the expression between displays (9) and (10) on \cite[p416]{Vou})  that 
\begin{equation}
\label{eq:61}
\log |\Phi_n(\alpha,\beta)|\ge (\varphi(n)-1)\log |\alpha|-2^{\omega(n)-1}\log 2-73\log |\alpha| f(n),
\end{equation}
where
\begin{equation}
\label{eq:fn}
f(n)\le \sum_{\substack{m\mid n\\ \mu(m)=1}} \log(n/m)^2.
\end{equation}
Since $f(n)\le 2^{\omega(n)-1} (\log n)^2$, we get
\begin{equation}
\label{eq:6}
\log |\Phi_n(\alpha,\beta)|\ge (\varphi(n)-1)\log |\alpha|-2^{\omega(n)-1}(\log 2+73\log |\alpha| (\log n)^2).
\end{equation}
In particular, using \eqref{eq:2} as well as \eqref{eq:5} and \eqref{eq:6}, we get 
\begin{equation}
\label{eq:7}
\log M_n\ge (\varphi(n)-1)\log |\alpha|-\log n-2^{\omega(n)-1}\left(\log 2+73\log|\alpha| (\log n)^2\right).
\end{equation}
We compare the above bound with an upper bound for $\log M_n$ which we obtain in the following way. We use sieves to get an 
upper bound on $\log M_n$ in terms of 
\begin{equation}
\label{eq:8}
\sum_{m_i\ge n-1} m_i(\log m_i-1).
\end{equation}
Then we get an upper bound on \eqref{eq:8} in terms of $n$ and $|\alpha|$. Finally, we match those two and we get an inequality relating $n$ and $|\alpha|$ which we exploit.

Let's get to work. Using again the fact that $m!\ge \sqrt{2\pi m}(m/e)^m>2(m/e)^m$, we get
$$
2|\alpha|^n\ge |u_n|\ge \prod_{i} m_i!\ge \prod_{m_i\ge n-1} 2(m_i/e)^{m_i},
$$
and taking logarithms we get
\begin{align}\label{nloga}
n\log |\alpha|\ge \sum_{m_i\ge n-1} m_i(\log m_i-1).
\end{align}
We now get an upper bound on $\log M_n$ in terms of the sum shown at \eqref{nloga}. For that, note that for any prime $p\equiv \pm 1\pmod n$, we have
\begin{eqnarray*}
\alpha_p & = & \nu_p(M_n)\le \nu_p(u_n)=\nu_p\left(\prod_{i=1}^k m_i!\right)\le  \sum_{m_i\ge n-1} \frac{m_i-1}{p-1}
\end{eqnarray*}
by using the fact that 
$$
\nu_p(k!)=\frac{k-\sigma_p(k)}{p-1}\leq \frac{k-1}{p-1},
$$ 
where $\sigma_p(k)$ is the sum of digits of $k$ in base 
$p$. Hence,
\begin{equation}
\label{eq:10}
\log M_n\le \sum_{p\equiv \pm 1\pmod n} \alpha_p \log p\le 
\sum_{m_i\ge n-1} (m_i-1)\sum_{\substack{p\le m_i \\ p\equiv \pm 1\pmod n}}\frac{\log p}{p-1}.
\end{equation}
It remains to evaluate the inner sums on the right above. We use a variation of an argument from \cite{Lu2}. That we split into two parts. When $p<3n$, we 
have $\log p<\log(3n)$ and
\begin{align*}
\sum_{\substack{p<3n\\ p\equiv \pm 1\pmod n}} \frac{\log p}{p-1}\leq &
\begin{cases} 
\left(\frac{1}{n-2}+\frac{1}{n}+\frac{1}{2n-2}+\frac{1}{2n}+\frac{1}{3n-2}\right)\log (3n) & \ {\rm if } \ n \ {\rm is \ even};\\
\left(\frac{1}{2n-2}+\frac{1}{2n}\right)\log (3n) & \ {\rm if } \  n \ {\rm is \ odd};
\end{cases}\\
\leq &\begin{cases} 
\frac{10.1\log (3n)}{3n} & \ {\rm if } \ n \ {\rm is \ even};\\
\frac{3.1\log (3n)}{3n} & \ {\rm if } \  n \ {\rm is \ odd};
\end{cases}
\end{align*}
since $n\geq 150$.  For $p>3n$, we have
\begin{align*}
\sum_{\substack{p\equiv \pm 1 \pmod n\\ p>3n}}\frac{\log p}{p-1}\leq &
\sum_{\substack{p\equiv \pm 1\pmod n\\ p>3n}}\frac{\log p}{p}+
\sum_{t\ge 3n+1}\left(\frac{\log (t-1)}{t-1}-\frac{\log t}{t}\right)\\
\leq & \sum_{\substack{p\equiv \pm 1\pmod n\\ p>3n}} \frac{\log p}{p}+\frac{\log(3n)}{3n}.
\end{align*}
Since $\varphi(n)\le n/2$  holds for $n$ even, we obtain $\frac{11.1\log (3n)}{3n}\leq \frac{11.1\log (3n)}{6\varphi(n)}$ 
for  $n$ even. The inequality $\frac{4.1\log (3n)}{3n}\leq \frac{11.1\log (3n)}{6\varphi(n)}$ also holds for $n$ odd. Therefore, we get that 
\begin{align}\label{logp}
\sum_{\substack{p\equiv \pm 1 \pmod n\\ p\leq m}}\frac{\log p}{p-1}\leq 
\sum_{\substack{p\equiv \pm 1\pmod n\\ 3n<p\leq m_i}} \frac{\log p}{p}+\frac{11.1\log(3n)}{6\varphi(n)}.
\end{align}
We use the estimate 
\begin{align}\label{pi-x-a}
\pi(x;n,a)\le \frac{2x}{\varphi(n)\log (x/n)} 
\end{align}
which holds for both $a\in \{\pm 1\}$ and when $x>n$, where $\pi(x;n,a)$ stands for the number of primes $p\le x$ satisfying 
$p\equiv a\pmod n$.  For simplicity, we put $\pi_1(x):=\pi(x; n,1)$ and $\pi_{-1}(x):=\pi(x; n,-1)$.  By Abel's summation 
formula, we have 
\begin{align*}
\sum_{\substack{3n<p\le m_i\\ p\equiv \pm 1\pmod n}} \frac{\log p}{p} 
\le & \left(\pi_1(m_i)+\pi_{-1}(m_i)\right)\frac{\log m_i}{m_i}-\left(\pi_1(3n)+\pi_{-1}(3n)\right)\frac{\log(3n)}{3n}\\
& - \int_{3n}^{m_1} \left(\pi_1(t)+\pi_{-1}(t)\right)\frac{1-\log t}{t^2} dt\\
\le &\frac{4}{\varphi(n)}\frac{m_i}{\log(m_i/n)}\frac{\log m_i}{m_i}
+ \frac{4}{\varphi(n)}\int_{3n}^{m_i} \frac{\log(t/n)+\log n-1}{t\log(t/n)}dt\\
\le &\frac{4}{\varphi(n)}\frac{\log m_i}{\log (m_i/n)}
+ \frac{4}{\varphi(n)}\int_{3n}^{m_i} \left(\frac{1}{t}+\frac{\log n-1}{t\log(t/n)}\right)dt\\
< &\frac{4}{\varphi(n)}\left(
\frac{\log m_i}{\log (m_i/n)}+\log m_i-\log 3n+(\log n-1)(\log \log (m_i/n))\right)
\end{align*}
We put $m_i=n^{1+c}$ with $c\geq \log 3/\log n$ since $m_i\geq 3n$.  Hence, we have from \eqref{logp} that 
$\sum_{\substack{p\le m_i\\ p\equiv \pm 1\pmod n}} \frac{\log p}{p}$ is 
\begin{align*}
\leq & \frac{4}{\varphi(n)}\left(\frac{c+1}{c}+(c+1)\log n-\log 3n+(\log n-1)(\log \log n+\log c)+\frac{11.1\log 3n}{24}\right)\\
\leq & \frac{4((c+1)\log n-1)}{\varphi(n)}\left( 1+\frac{(\log n-1)}{(c+1)\log n-1}(\log \log n+\log c))-
\frac{\frac{12.9}{24}\log 3n-2-\frac{1}{c}}{(c+1)\log n-1}\right)\\
\leq & \frac{4(\log m_i-1)}{\varphi(n)}\left( 1+\frac{\log \log n}{c+1}+\frac{\log c}{c+1}-
\frac{\frac{12.9}{24}\log 3n-2-\frac{1}{c}}{(c+1)\log n-1} \right).
\end{align*}
If $c\geq 1$; that is, if $m_i\ge n^2$, then the expression inside the bracket is at most $1+(\log\log n)/2$. If
$c<1$ but $(\log 3n)/2-2-1/c\geq 0$, the expression inside the bracket is at most $1+\log\log n$. Assume now that
$(\log 3n)/2<2+1/c$. Since $n^c\geq 3$, we have $1/c\leq \frac{\log n}{\log 3}$ and therefore
\begin{eqnarray*}
&& -\frac{\frac{12.9}{24}\log 3n-2-\frac{1}{c}}{(c+1)\log n-1} \leq \frac{\frac{\log n}{\log 3}+2-\frac{12.9\log 3n}{24}}{\log n+2}
\\
& \leq &  \frac{0.38\log n+1.5}{\log n+2}\leq .38+\frac{.74}{\log n+2}\leq .5
\end{eqnarray*}
Again, from $1/c> (\log 3n)/2-2\geq \frac{\log n}{3}$, we have $\log c<-\log \log n+\log 3$ and therefore 
\begin{align*}
1+\frac{\log \log n}{c+1}+\frac{\log c}{c+1}-\frac{\frac{12.9}{24}\log 3n-2-\frac{1}{c}}{(c+1)\log n-1} 
\leq 1+\frac{\log 3}{c+1}+.5\\
< 1.6<1+\log \log n 
\end{align*}
since $n\geq 150$.  Therefore, we have 
\begin{align}\label{ublogp}
\sum_{\substack{p\le m_i\\ p\equiv \pm 1\pmod n}} \frac{\log p}{p} \leq \begin{cases}
\frac{4(\log m_i-1)}{\varphi(n)}\left(1+\log \log n\right) \  & {\rm if } \ m_i<n^2;\\
\frac{4(\log m_i-1)}{\varphi(n)}\left(1+\frac{\log \log n}{2}\right) \  &  {\rm if } \ m_i\geq n^2.
\end{cases}
\end{align}
Putting this this into \eqref{eq:10}, we get
$$
\log M_n\le \left(\frac{4(1+\log\log n)}{\varphi(n)}\right)\left(\sum_{m_i\ge n-1} (m_i-1)(\log m_i-1)\right),
$$
which combined with \eqref{nloga} gives
\begin{equation}
\label{eq:14}
\log M_n\le  \left(\frac{4(1+\log\log n)}{\varphi(n)}\right) n\log |\alpha|.
\end{equation}
Combining \eqref{eq:14} with \eqref{eq:7}, we get
\begin{eqnarray*}
 (\varphi(n)-1)\log |\alpha| & - & \log n-2^{\omega(n)-1}\left(\log 2+73\log|\alpha| (\log n)^2\right)\\
 & \le & \left(\frac{4(1+\log\log n)}{\varphi(n)}\right) n\log |\alpha|.
 \end{eqnarray*}
 This is equivalent to
 \begin{eqnarray*}
\frac{\varphi(n)}{n} \left\{\varphi(n)-1-\frac{\log n}{\log |\alpha|}-\frac{2^{\omega(n)} \log 2}{2\log |\alpha|}-
 73\times 2^{\omega(n)-1}  (\log n)^2\right\}
  \le 4+4\log\log n.
 \end{eqnarray*}
 Using $\log |\alpha|\ge 0.5\log n$ from \eqref{eq:n0} as well as effective estimates from prime number theory given by 
 $$
 \varphi(n)>\frac{n}{e^{\gamma}\log\log n+2.50637/\log \log n}\qquad {\text{\rm and}}\qquad \omega(n)<\frac{1.3841\log n}{\log\log n},
 $$
 (see Th\'eor\`eme 11 of \cite{Ro} and Theorem 15 of \cite{Ros}) with $\gamma=0.57721\ldots<0.5722$,  we get $n<18\times 10^6$. 
 We obtain $\omega(n)\le 8$, 
 $$
 \varphi(n)\ge n\prod_{k=1}^8 \left(1-\frac{1}{p_k}\right),
 $$
 where $p_k$ is the $k-$th prime. From Voutier  \cite[Lemma 7]{Vou}, we get an improvement  of the trivial inequality 
 $f(n)\le 128 (\log n)^2$ to 
 $$
 f(n)\le 128 (\log n)^2-1886 (\log n)+7913, 
 $$
 and substituting this into \eqref{eq:61}  and redoing the above calculation we get $n<3.9\times 10^6$. Then $\omega(n)\le 7$. Using
 $$
 \varphi(n)\ge n\prod_{k=1}^7 \left(1-\frac{1}{p_k}\right),
 $$
 and the fact that $f(n)\le 64 (\log n)^2-775 \log n+2718$ when $\omega(n)\leq 7$ proved also in \cite[Lemma 7]{Vou}, we get 
 $n<1.852\times 10^6$. We improve this bound further.  First we observe that $n$ is even if $\omega(n)=7$ else 
 $n\geq \prod^8_{i=2}p_i>4\cdot 10^6$.  We first prove the following lemma which we need for reducing the bound further. 
 This ideas can be exploited further by those who would like to reduce the bound further.  
\begin{lemma}
For $n$ odd, we have 
\begin{align}
\label{w7}
\log M_n\ge &(\varphi(n)-1)\log |\alpha|-(1+\frac{2^{\omega}}{4\omega})\log n-2^{\omega-2}\log 2-g_{\omega}\log  |\alpha|,
\end{align}
where $\omega=\omega(n)$ and 
\begin{align}
\label{gw}
g_{\omega}=\begin{cases}
73(11\log^2 n-87.5\log n+194.1)+0.0027n+3.1 & \ {\rm if} \ \omega=6;\\
73(7\log^2 n-49.1\log n+101.6)+\frac{n}{1155}+0.2 & \ {\rm if} \ \omega=5;\\
73(4\log^2 n-22.6\log n+43.1) & \ {\rm if} \ \omega=4;\\
73(2\log^2 n-6.8\log n+11.6) & \ {\rm if} \ \omega=3;\\
73\log^2 n & \ {\rm if} \ \omega\leq 2.
\end{cases}
\end{align}

For $n$ even, we have 
\begin{align} \label{w8}
\log M_n\ge &(\varphi(n)-1)\log |\alpha|-\log n-2^{\omega-2}\log 2-h_{\omega}\log  |\alpha|,
\end{align}
where $\omega=\omega(n)$ and 
\begin{align}
\label{hw}
h_{\omega}=\begin{cases}
73(16 \log^2 \frac{n}{2}-139\log \frac{n}{2}+327)+0.0032n+3.1 & \ {\rm if} \ \omega=7;\\
73(11 \log^2 \frac{n}{2}-87.5\log \frac{n}{2}+194.1)+0.002n+0.97 & \ {\rm if} \ \omega=6;\\
73(7 \log^2 \frac{n}{2}-49.1\log \frac{n}{2}+101.6)+0.0005n+0.2 & \ {\rm if} \ \omega=5;\\
73(4 \log^2 \frac{n}{2}-22.6\log \frac{n}{2}+43.1) & \ {\rm if} \ \omega=4;\\
73(2\log^2 \frac{n}{2}-6.8\log \frac{n}{2}+11.6) & \ {\rm if} \ \omega=3;\\
73\log^2 \frac{n}{2} & \ {\rm if} \ \omega\leq 2.
\end{cases}
\end{align}

\end{lemma}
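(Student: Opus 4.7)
The plan is to sharpen the argument leading to \eqref{eq:61} in two complementary ways, and then to carry out a case analysis on $\omega=\omega(n)\in\{2,\ldots,7\}$ together with a parity split on $n$. As in the derivation of \eqref{eq:61}, we would start from
$$
\log|\Phi_n(\alpha,\beta)|=\sum_{\substack{m\mid n\\ \mu(m)=1}}\log|\alpha^{n/m}-\beta^{n/m}|-\sum_{\substack{m\mid n\\ \mu(m)=-1}}\log|\alpha^{n/m}-\beta^{n/m}|,
$$
combined with $M_n\ge |\Phi_n(\alpha,\beta)|/\delta$, where $\delta\in\{2,3,P(n)\}$. For each $\mu(m)=-1$ factor use the trivial upper bound $|\alpha^{n/m}-\beta^{n/m}|\le 2|\alpha|^{n/m}$, while for each $\mu(m)=1$ factor choose between \eqref{a-b1} (when $(n/m)/\gcd(n/m,2)\le 5358$) and \eqref{a-b2} (otherwise). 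The identity $\sum_{m\mid n}\mu(m)n/m=\varphi(n)$ assembles the main $(\varphi(n)-1)\log|\alpha|$ term, and the remaining error splits into three classes: $\log 2$ losses from the $\mu=-1$ factors, quadratic-in-$\log$ losses from \eqref{a-b2}, and linear-in-$(n/m)$ losses from \eqref{a-b1}.

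The factor-of-two improvement $2^{\omega-1}\log 2\to 2^{\omega-2}\log 2$ will come from pairing each $\mu(m)=-1$ divisor with a companion $\mu(mp)=1$ divisor for a suitable prime $p\mid n$, and absorbing half of the $\log 2$ contributions into the slack of \eqref{a-b1}/\eqref{a-b2} on the partner factor. For the dominant quadratic-in-$\log n$ loss we invoke Voutier's \cite[Lemma~7]{Vou}, which refines $f(n)\le 2^{\omega-1}\log^2 n$ to an explicit quadratic in $\log n$ for each $\omega$; the coefficients $11,\,87.5,\,194.1$ ($\omega=6$), $7,\,49.1,\,101.6$ ($\omega=5$), and so on, should read off directly from that inequality, while the small cases $\omega\le 2$ follow by direct enumeration of the at most two relevant divisors. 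The additive terms $0.0027n+3.1$, $n/1155+0.2$, $0.0032n+3.1$, etc.\ appearing in \eqref{gw} and \eqref{hw} should come from those divisors $m$ for which we are forced to use \eqref{a-b1}: the linear loss $(n/m)/\gcd(n/m,2)\log|\alpha|$ summed over such $m$ produces a term $c_\omega\, n\log|\alpha|$, with $c_\omega$ computed by summing $1/m$ over the relevant divisor class.

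The parity split reflects the fact that $\gcd(n/m,2)=2$ for every odd $m\mid n$ when $n$ is even, which explains the uniform replacement of $\log n$ by $\log(n/2)$ throughout \eqref{hw}. For odd $n$ every such gcd equals $1$, the factor $\delta$ may be as large as $P(n)\le n$, and the coefficient $1+\frac{2^{\omega}}{4\omega}$ of $\log n$ in \eqref{w7} should absorb both $\log\delta$ and the aggregate of the small additive $\bigl(\frac{\log 2}{4}+0.02\bigr)\log|\alpha|$ corrections from \eqref{a-b1}, normalised against the bound $\log n\ge \omega\log 3$ valid for odd $n$. The main obstacle will be bookkeeping: for each $\omega\in\{2,\ldots,7\}$ we must enumerate which divisors fall in the \eqref{a-b1} versus \eqref{a-b2} regime and then verify the explicit constants in \eqref{gw} and \eqref{hw} by direct summation. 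The structural argument is a mechanical refinement of the one producing \eqref{eq:61}; the difficulty lies in ensuring that no constant slips through the long chain of inequalities.
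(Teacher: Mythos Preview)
Your overall plan---refine the derivation of \eqref{eq:61} by pairing divisors and by selectively using \eqref{a-b1} versus \eqref{a-b2}---is the right one, and matches the paper's strategy. But three of your explanations for where the specific terms come from are wrong, and following them as written would not produce \eqref{w7}--\eqref{hw}.

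\textbf{The pairing and the $\frac{2^{\omega}}{4\omega}\log n$ term.} The paper does not ``absorb $\log 2$ into the slack'' of \eqref{a-b1}/\eqref{a-b2}. It fixes the \emph{smallest} prime $q\mid n$, writes the radical as $a=qa_0$, and pairs each $d\mid a_0$ with $qd$. For $\mu(d)=-1$ the pair contributes
\[
\log\frac{|1-x^{n/qd}|}{|1-x^{n/d}|}\ge -\log q,
\]
because $1-x^{qM}=(1-x^{M})(1+x^{M}+\cdots+x^{(q-1)M})$ and $|x|\le 1$. There are $2^{\omega-2}$ such pairs, and for odd $n$ one has $\log q\le (\log n)/\omega$, which is exactly the $\frac{2^{\omega}}{4\omega}\log n$ in \eqref{w7}. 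For even $n$ the smallest prime is $q=2$ and the $\mu(d)=1$ pairs collapse via $|1-x^{n/d}|/|1-x^{n/2d}|=|1+x^{n/2d}|$; this identity, not the $\gcd$ in \eqref{a-b1}, is why $n/2$ replaces $n$ throughout \eqref{hw}, and why the extra $\frac{2^{\omega}}{4\omega}\log n$ disappears in the even case. Your proposed source for that term (the $(\tfrac{\log 2}{4}+0.02)$ corrections normalised against $\log n\ge \omega\log 3$) is not what is happening; those small corrections are the ``$+3.1$'', ``$+0.97$'', ``$+0.2$'' tails inside $g_\omega$ and $h_\omega$.

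\textbf{The quadratic coefficients do not come from Voutier's Lemma~7.} That lemma bounds $f(n)=\sum_{m\mid n,\,\mu(m)=1}\log^2(n/m)$, summed over \emph{all} $\mu=1$ divisors. Here one applies \eqref{a-b2} only to the $\mu(d)=1$ divisors $d\mid a_0$ with $\omega(d)\in\{0,2\}$, and \eqref{a-b1} to those with $\omega(d)\ge 4$. So the leading coefficient of $\log^2 n$ is $1+\binom{\omega-1}{2}$, giving $11,7,4,2,1$ for $\omega=6,5,4,3,\le 2$, and the lower-order coefficients come from expanding $\sum_i\log^2(n/d_{2,i})$ over the smallest odd squarefree integers with two prime factors. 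The linear-in-$n$ pieces $0.0027n$, $n/1155$, etc.\ come from $n\sum_i 1/d_{4,i}$ (and, in the even case, also $1/d_{6,i}$), again over the smallest such squarefree integers. Your criterion ``use \eqref{a-b1} when $(n/m)/\gcd(n/m,2)\le 5358$'' is morally compatible with this, but the paper's blanket split by $\omega(d)$ is what makes the constants tabulable.

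In short: the architecture you describe is correct, but the three mechanisms above are the actual content of the lemma, and your proposal misattributes each of them.
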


\begin{proof} 
We write $n=ab$ where $a$ is the radical of $n$, i.e., the product of distinct primes dividing $n$.  Every divisor $d|n$ with 
$\mu(d)=\pm 1$ is also a divisor of $a$. We have from \eqref{eq:3} that
\begin{align*}
\log |\Phi(\alpha, \beta)|=\sum_{d|a}\mu(d)\log |\alpha^{n/d}-\beta^{n/d}|=\varphi(n)\log |\alpha|+\sum_{d|a}\mu(d) \log |1-x^{n/d}| 
\end{align*}
where $x=\frac{\beta}{\alpha}$.  We will estimate  $\sum_{d|a}\mu(d) \log |1-x^{n/d}|$ to prove \eqref{w7}.  When 
$\omega(n)=1$ with $n=p^r$, we have $\log  |1-x^n|-\log |1-x^{n/p}|\geq \log  |1-x^{n}|-\log 2$ and the assertion 
follows from \eqref{a-b2}. Hence, we consider $\omega(n)\geq 2$. 

Let $q$ be the least prime divisor of $n$. Write $a=qa_0$ so that 
$q\nmid a_0$. Then every divisor $d|a_0$ gives two distinct divisors $d$ and $qd$ of $a_0$. 
Let $d|a_0$ with $\mu(d)=-1$. This gives $\mu(qd)=1$ and we have 
\begin{align}\label{l2}
\mu(d) \log |1-x^{n/d}|+\mu(qd) \log |1-x^{n/qd}|=\log \frac{|1-x^{n/qd}|}{|1-x^{n/d}|}\geq -\log q.
\end{align}
 For $d|a_0$ with  $\mu(d)=1$, we have from $\mu(qd)=-1$ that 
 \begin{align*}
\mu(d) \log |1-x^{n/d}|+\mu(qd) \log |1-x^{n/qd}|\geq  \log |1-x^{n/d}|-\log 2.
\end{align*}
If $\mu(d)=1$ and $q=2$, then
 \begin{align}\label{l3}
\mu(d)\log |1-x^{n/d}|+\mu(2d) \log |1-x^{n/dq}|= \log |1+x^{n/2d}|.
\end{align}
Let $n$ be odd. Then $\omega=\omega(n)\leq 6$ and $q>2$.  We have 
\begin{align*}
\sum_{d|a}\mu(d) \log |1-x^{n/d}|\geq \sum_{\underset{\mu(d)=1}{d|a_0}} \log |1-x^{n/d}|-2^{\omega-2}\log(2q)
\end{align*}
since $\omega(a_0)=\omega -1$. Observe that $d|a_0$ are squarefree. 
Now we use \eqref{a-b1} for $\omega(d)=4$ and \eqref{a-b2} for $d=1$ or $\omega(d)=2$ along with 
$\log q\leq \frac{\log n}{\omega(n)}$ to obtain \eqref{w7}.  For  $k\in \{2, 4\}$, there are  
$\binom{\omega-1}{k}$ number of $d$'s with $d|a_0$ and $\omega(d)=k$. Let 
$d_{k, 1}<d_{k, 2}<\cdots $ be the sequence of odd squarefree numbers with each $\omega(d_{ki})=k$. 
We obtain 
\begin{align*}
 \sum_{\underset{\mu(d)=1}{d|a_0}} \log |1-x^{n/d}|\geq &-73\log \alpha \left((\log n)^2+\sum^{\binom{\omega-1}{2}}_{i=1}
 \log^2\left(\frac{n}{d_{2, i}}\right)\right)\\
 &-\log |\alpha| \left(n\sum^{\binom{\omega-1}{4}}_{i=1}\frac{1}{d_{4, i}}+\binom{\omega-1}{4}
 \left(\frac{\log 2}{4}+0.02\right)\right)\\
 \geq &-(\log |\alpha|) g_{\omega}
\end{align*}
for each $\omega(n) \leq 6$ and by expanding $\log^2 (n/d)=\log ^2n-2(\log d)(\log n)+\log ^2d$.  

Let $d$ be even and hence $q=2$. We obtain from \eqref{l2} and \eqref{l3} that 
\begin{align*}
\sum_{d|a}\mu(d) \log |1-x^{n/d}|\geq \sum_{\underset{\mu(d)=1}{d|a_0}} \log |1+x^{n/2d}|-2^{\omega-2}\log 2. 
\end{align*}
The right hand side of the inequalities \eqref{a-b1} and \eqref{a-b2} are also lower bounds for 
$\alpha^m+\beta^m$ for $m\geq 3$ (see proof of \cite[Lemma 5]{Vou}). Observe that 
$d|a_0$ are odd squarefree. As in the $n$ odd case, we use \eqref{a-b1} for $\omega(d)=4, 6$ and 
\eqref{a-b2} for $d=1$ or $\omega(d)=2$ to obtain \eqref{w8}. We obtain 
\begin{align*}
 \sum_{\underset{\mu(d)=1}{d|a_0}} \log |1+x^{n/2d}|\geq &-73\log \alpha \left((\log n/2)^2+\sum^{\binom{\omega-1}{2}}_{i=1}
 \log^2\left(\frac{n}{2d_{2, i}}\right)\right)\\
 &-\log |\alpha| \left(n\sum^{\binom{\omega-1}{4}}_{i=1}\frac{1}{2d_{4, i}}+\binom{\omega-1}{4}
\left(\frac{\log 2}{4}+0.02\right)\right)\\
 &-\log |\alpha| \left(n\sum^{\binom{\omega-1}{6}}_{i=1}\frac{1}{2d_{6, i}}+\binom{\omega-1}{6}
 \left(\frac{\log 2}{4}+0.02\right)\right)\\
 \geq &-(\log |\alpha|) h_{\omega}.
\end{align*}
Hence the assertion.  
\end{proof}
Now we combine the above lower bound for $\log M_n$  with the upper bound given by \eqref{eq:14} and use 
$\varphi(n)\ge n\prod_{k=1}^{\omega (n)} \left(1-\frac{1}{p_k}\right)$ if $n$ is even and  
$\varphi(n)\ge n\prod_{k=2}^{\omega (n)+1} \left(1-\frac{1}{p_k}\right)$ if $n$ is odd. We obtain 
$n\leq 500000$ implying $\omega(n)\leq 6$. Further, we get $n<270000$ if $n$ is even and 
$n<150000$ if $n$ is odd. This implies the first assertion of the theorem.

Now in case $\alpha, \beta $ are real, we use \eqref{eq:5} instead of \eqref{eq:7} with $\omega (n)\in \{3, 4, 5, 6, 7\}$ and 
using $\varphi(n)\ge n\prod_{k=1}^{\omega (n)} \left(1-\frac{1}{p_k}\right)$, we obtain $n\leq 167, 252, 1000$ according 
to whether $\omega(n)=3, 4$ or $\omega(n)>4$, respectively. Thus, $\omega(n)\leq 4$ and further 
$n\le 2\cdot 3\cdot 5\cdot 7=210$. 

When $s=\pm 1$, we use \eqref{eq:55} instead and check that there is no value $n\in [151,210]$ for which 
the resulting inequality holds. 

Finally,  we replace $\{U_n\}_{n\ge 0}$ by $\{V_n\}_{n\ge 0}$ in \eqref{eq:1}. The first ingredient of the problem was the upper bound $|U_n|\le 2|\alpha|^n$ which holds when $U_n$ is 
replaced by $V_n$ as well. As for the ``primitive part", since $V_n=U_{2n}/U_n$, it follows that in fact we have the better inequality that the primitive part of $V_n$ is at least as large as $\log |\Phi_{2n}(\alpha,\beta)|/\delta$. 
In addition, the primitive prime factors of $V_n$ are congruent to $\pm 1$ modulo $2n$. The above arguments now imply immediately that the same conclusion holds for this case and in fact that $2n\le 3\times 10^5,~210,~150$ 
for the general case, real $\alpha,~\beta$ case, and $s=\pm 1$ case, respectively. 

\section*{Acknowledgements}

Part of this work was done when the first author visited the School of Maths of Wits University in December 2018. 
 He thanks this Institution for hospitality and CoEMaSS Grant RTNUM18 for financial support.   The second  author 
 was supported in part by NRF (South Africa) Grant CPRR160325161141 and by CGA (Czech Republic) Grant 17-02804S.


\begin{thebibliography}{9999}

\bibitem{BHV} Y. Bilu, G. Hanrot and P. M. Voutier, ``Existence of primitive divisors of Lucas and Lehmer numbers, with an appendix by M. Mignotte", {\it J. reine angew. Math.\/} {\bf 539} (2001), 75--122.

\bibitem{Lu}   F. Luca, ``Products of factorials in binary recurrence sequences", {\it Rocky Mtn. J. of Math.\/} {\bf 29} (1999), 1387--1411.

\bibitem{LuSt}   F. Luca and P. St\u anic\u a, ``$F_1F_2F_3F_4F_5F_6F_8F_{10}F_{12}=11!$", {\it Portugaliae
Math.\/} {\bf 63} (2006), 251--260. 

\bibitem{Lu2} F. Luca, ``Fibonacci numbers with the Lehmer property", {\it Bull. Pol. Acad. Sci. Math.\/} {\bf 55} (2007),
7--15.

\bibitem{Ro} G. Robin, ``Estimation de la fonction de Tchebychef $\theta$ sur le $k$-i\`eme nombre premier et grandes valeurs de la fonction $\omega(n)$ nombre de diviseurs premiers de $n$", 
{\it Acta Arith.\/} {\bf 42} (1983), 367--389.

\bibitem{Ros} J.B. Rosser and L. Schoenfeld, ``Approximate formulas for some functions of prime numbers", {\it Illinois J. Math.\/} {\bf 6 }(1962), 64--94.

\bibitem{Vou} P. Voutier,``Primitive divisors of Lucas and Lehmer sequences, III", {\it  Math. Proc. Cambridge Philos. Soc.\/} {\bf 123} (1998), 407--419.

\end{thebibliography}
\end{document}